\documentclass[10pt]{article}


\usepackage{amsmath,amsthm}
\usepackage{amssymb}
\usepackage{url}
\usepackage{srcltx}
\usepackage{enumerate}
\usepackage[latin1]{inputenc}
\usepackage{mathrsfs}
\usepackage{color}
\usepackage{curves}
\usepackage{eepic}
\usepackage{epic}
\usepackage{graphics,color}
\usepackage{pstricks}
\usepackage{pstricks-add,pst-node}
\usepackage{subfigure}
\usepackage{pstricks}
\usepackage{pstricks-add,pst-node}
\usepackage{curves}

\renewenvironment{proof}{\par \noindent \textbf{Proof.}
}{\hfill$\Box$\medskip}

\newtheorem{theorem}{Theorem}
\newtheorem{corollary}[theorem]{Corollary}

\newtheorem*{claim}{Claim}
\newtheorem{proposition}[theorem]{Proposition}
\newtheorem{pr}[theorem]{Property}
\newtheorem{lemma}[theorem]{Lemma}

\date{\today}

 \title{Distinguishing Number for Some Circulant Graphs }

  \author{Sylvain GRAVIER\thanks{Institut Fourier - SFR Maths \`{a} Modeler.UMR 5582 CNRS/Universit\'{e} Joseph Fourier \ 
    100 rue des maths, BP 74, 38402 St Martin d'H\`{e}res, France}, Kahina Meslem  \thanks{Laboratoire LaROMaD,
 SFR Maths \`{a} Modeler. Facult\'{e} des Math\'{e}matiques, U.S.T.H.B. 
   El Alia Bab-Ezzouar  16111, Algiers, Algeria}, Souad~ SLIMANI  \footnotemark[2]}
  
\begin{document}

\maketitle


\begin{abstract}
Introduced by Albertson et al. \cite{albertson},
the distinguishing number $D(G)$ of a graph $G$ is the least integer $r$ such that
there is a $r$-labeling of the vertices of $G$ that is not preserved by any nontrivial
automorphism of $G$. Most of graphs studied in literature have 2 as a distinguishing number value except
complete, multipartite graphs or  cartesian product of complete graphs depending on $n$. In this paper, we study  circulant
graphs of order $n$ where the adjacency is defined using a symmetric subset $A$ of $\mathbb{Z}_n$, called generator.
We give a construction of a family of circulant graphs of order $n$ and we show that this class has distinct distinguishing numbers and these lasters are not depending on $n$.
    `
\end{abstract}

%


\section{Introduction}\label{sec:in}
In 1979, F.Rudin \cite{rudin} proposed a problem in Journal of Recreational Mathematics
by introducing the concept of the breaking symmetry in graphs. Albertson et al.\cite{albertson}
studied the distinguishing number in graphs defined as the minimum number of labels
needed to assign to the vertex set of the graph in order to distinguish any non
trivial automorphism graph. The distinguishing number is widely focused in the recent years
: many articles deal with this invariant in particular classes of graphs: trees \cite{tree},
hypercubes \cite{Bogstad}, product graphs \cite{klav_power} \cite{Imrich_cartes_power}
\cite{klav_cliques} \cite{Fisher_1} and  interesting algebraic properties of
distinguishing number were given in \cite{Potanka}  \cite{tym} and \cite{Z}.
Most of non rigid structures of graphs (i.e structures of graphs having at most
one non trivial automorphism)  need just two labels to destroy any non
trivial automorphism. In fact, paths $P_n$ $(n>1)$, cycles $C_n$ $(n>5)$, hypercubes $Q_n$  $(n>3)$, $r$ $(r>3)$
times cartesian product of a graph $G^r$ where $G$ is of order $n>3$, circulant graphs
of order $n$ generated by $\{\pm 1,\pm 2,\dots \pm k\}$ \cite{gravier}($n\geq2k+3$) have 2 as a common
value of distinguishing number. However, complete graphs, complete multipartite graphs
\cite{chrom} and cartesian product of complete graphs (see \cite{klav_cliques} \cite{Fisher_1} \cite{Fisher_2})
are the few classes with a big distinguishing number. The associated invariant increases with the order of the graphs.
In order to surround the structure of a graph of a given order $n$ and get a proper distinguishing number we built
regular graphs $C(m,p)$ of order $mp$ where the adjacency is described  by introducing a generator $A$ $(A \subset \mathbb{Z}_{m.p})$.
These graphs are generated by $A=\{(p-1)+ r.p, (p+1)+ r.p$ : $0\leq r \leq m-1\}$ for all $n=m.p \geq 3$. In fact, the motivation of this paper is to give an answer to this following question, noted ${\mathcal{(Q)}}$:\\
``Given a sequence of ordered and distinct integer numbers $d_1,d_2,\dots,d_r$
in $\mathbb{N}^* \setminus \{1\}$, does it exist an integer $n$ and $r$
graphs $G_i$ $(1\leq i\leq r)$ such that $D(G_i)=d_i$ for all $i=1,\dots,r$  and $n$ is the common order of the $r$ graphs?"\\
In the following proposition, we give the answer to this question:

\begin{proposition}\label{disconnected}
Given an ordered sequence of $r$ distinct integers $d_1,d_2,\dots,d_r$  with $r\geq2$ and $d_i\geq 2$ for $i=1,\dots,r$,
there exists $r$ graphs $G_1,G_2,\dots,G_r$  of order $n$ such that $G_i$ contains a clique $K_{d_i}$ and $D(G_i)=d_i$
for all $1\leq i\leq r$.
\end{proposition}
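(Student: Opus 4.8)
The plan is to realize each $G_i$ as a disconnected graph whose only ``rich'' component is the clique $K_{d_i}$, padding the remaining vertices with a symmetry-poor component so that the overall distinguishing number is dictated solely by the clique. Concretely, fix $n$ large (say $n \ge 2 + \max_i d_i$) and set $G_i = K_{d_i} \cup P_{n-d_i}$, the disjoint union of a complete graph on $d_i$ vertices and a path on the remaining $n-d_i$ vertices. Each $G_i$ then has order $n$ and contains the required clique $K_{d_i}$ by construction, so it only remains to compute $D(G_i)$.

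I would first record the two elementary facts $D(K_d)=d$ and $D(P_m)=2$ for $m\ge 2$: for $K_d$ every permutation of the vertices is an automorphism, so a distinguishing labeling must be injective on them; for a path the only nontrivial automorphism is the reflection, which two labels suffice to destroy. The decisive step is then the behavior of the distinguishing number under disjoint union. Since $d_i\ge 2$, the clique and the path components of $G_i$ have different orders and, for $n$ large, are non-isomorphic, so no automorphism of $G_i$ exchanges or mixes the two components; hence $\mathrm{Aut}(G_i)\cong S_{d_i}\times \mathrm{Aut}(P_{n-d_i})$ acting independently on the two vertex sets. From this product structure the problem splits: a labeling $f$ of $G_i$ is preserved by no nontrivial automorphism if and only if its restriction to each component is a distinguishing labeling of that component. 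Consequently $D(G_i)=\max\{D(K_{d_i}),D(P_{n-d_i})\}=\max\{d_i,2\}=d_i$, using $d_i\ge 2$. The lower bound $D(G_i)\ge d_i$ can also be seen directly, since $K_{d_i}$ is a whole component and every permutation of its vertices extends to an automorphism of $G_i$ fixing the path, forcing any distinguishing labeling to be injective on those $d_i$ vertices.

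The main obstacle is justifying that the two components cannot be permuted into one another and that the distinguishing number of the disjoint union equals the maximum over the components; I expect most of the work to lie in verifying the non-isomorphism of the components (which forces every automorphism to respect the component partition) and in the ``if and only if'' characterizing distinguishing labelings of a disjoint union of pairwise non-isomorphic connected graphs. Choosing $n$ large from the outset, so as to avoid in particular the degenerate coincidence $K_2\cong P_2$ arising when some $d_i=2$, removes every isomorphism between a clique and a path, which is all that the argument needs.
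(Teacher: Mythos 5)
Your proposal is correct and follows essentially the same route as the paper: realize each $G_i$ as the disjoint union of $K_{d_i}$ with a path, check the two components are non-isomorphic so that automorphisms preserve them, and conclude $D(G_i)=\max\{D(K_{d_i}),D(P_{n-d_i})\}=d_i$. The only difference is that the paper takes $n=d_r$ (so $G_r$ is just the clique) and must treat the degenerate case $d_1=2$, $n=4$ separately, whereas you sidestep that coincidence $K_2\cong P_2$ by choosing $n$ large from the start, which is a clean and valid simplification since $n$ is existentially quantified.
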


\begin{proof}
 Suppose that $d_1\neq 2$  and $n=d_r$. For the integer $d_r$, we assume that $G_r \simeq K_{d_r}$ and  $D(G_r)=d_r$.\\
For the other integers, we consider the disconnected $(r-1)$ graphs $G_i$ having two connected component
$C$ and $C'$  such that $C\simeq K_{d_i}$ and
$C'$  is a path $P_{n-d_i}$ for all $i=1,\dots,(r-1)$.\\
Observe that, when  $d_1\neq 2$ or $n= d_r\neq4$, then the connected component $C$ and $C'$ can not be isomorphic. By consequence, an automorphism $\delta$ of a graph
$G_i$ acts in the same connected component for all $1\leq i\leq r-1$. More than, $D(G_i)=\max (D(C),D(C'))=D(C)=d_i$ for all $1\leq i\leq r-1$. \\
If $d_1= 2$ and $n= d_r=4$ the same graphs are considered except for $G_1$ where we put $G_1\simeq P_4$. 
Then, $D(G_1)=2=d_1$.
\end{proof}%

\noindent The graphs of Proposition \ref{disconnected} are not completely satisfying
since these ones are not connected. Furthermore, these graphs
give no additional information for graphs having hight distinguishing number,
since they just use cliques for construction. So our purpose
is to construct connected graphs structural properties that give answer to question ${\mathcal{(Q)}}$
\begin{theorem}\label{main}
Given an ordered sequence of $r$ distinct integers $d_1,d_2,\dots,d_r$
with $r\geq2$ and $d_i\geq 2$ for $i=1,\dots,r$, there exists $r$
connected circulant graphs $G_1,G_2,\dots,G_r$  of order $n$ such that $D(G_i)=d_i$.
\end{theorem}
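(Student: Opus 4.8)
The plan is to exploit the concrete generator $A=\{(p-1)+rp,\,(p+1)+rp : 0\le r\le m-1\}$ to identify the graph $C(m,p)$ with a blow-up of a cycle, and then to read off its distinguishing number from that picture. First I would reduce every element of $A$ modulo $p$: since $(p-1)+rp\equiv -1$ and $(p+1)+rp\equiv 1 \pmod p$, the connection set lies entirely in the two residue classes $\pm 1 \pmod p$. Partitioning $V=\mathbb{Z}_{mp}$ into the $p$ residue classes $R_j=\{j+ap : 0\le a\le m-1\}$ modulo $p$, this shows that edges can only join $R_j$ to $R_{j\pm 1 \bmod p}$. A short computation with the difference $1+(b-a)p$ between a vertex of $R_{j+1}$ and a vertex of $R_j$ then shows that as $k$ runs over $\{1,\dots,m\}$ it hits every residue modulo $m$ exactly once, so in fact every vertex of $R_j$ is adjacent to every vertex of $R_{j+1}$, while each $R_j$ is an independent set. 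Hence $C(m,p)$ is exactly the blow-up of the cycle $C_p$ in which each vertex is replaced by an independent set of size $m$ and consecutive classes are completely joined.

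With this structural lemma in hand I would compute $D(C(m,p))$. The vertices inside a class $R_j$ are pairwise false twins (same neighbourhood, non-adjacent), so for $p\ge 6$ the automorphism group is the imprimitive group $S_m \wr D_p$: arbitrary permutations within each class, composed with the dihedral symmetries of the underlying cycle acting on the $p$ classes. The distinguishing number then decouples into two requirements. To kill all within-class permutations, each class must receive $m$ pairwise distinct colours (otherwise swapping two equally coloured twins is a nontrivial colour-preserving automorphism), which already forces $D\ge m$. To kill the dihedral action on the classes, the cyclic sequence of colour-sets $(S_0,\dots,S_{p-1})$, where $S_j\subseteq[d]$ is the set of colours used in $R_j$, must admit no nontrivial rotation or reflection. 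With only $d=m$ colours every class is forced to be the full palette, all $S_j$ coincide, and a one-step rotation survives; so $D\ge m+1$. Conversely, with $d=m+1$ colours there are $\binom{m+1}{m}=m+1\ge 2$ possible colour-sets, so (using $D(C_p)=2$ for $p\ge 6$) one can arrange at least two of them around the cycle to form an asymmetric necklace while keeping every class a rainbow. This yields $D(C(m,p))=m+1$ for all $p\ge 6$, a value independent of $p$ and hence of the order $n=mp$.

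Finally I would assemble the family. Given the sequence $d_1,\dots,d_r$, set $m_i=d_i-1\ge 1$; the $m_i$ are distinct because the $d_i$ are. Choose a common order $n$ that is a multiple of every $m_i$ with $n/m_i\ge 6$ for all $i$ (for instance $n=6\,\mathrm{lcm}(m_1,\dots,m_r)$), and put $p_i=n/m_i$ and $G_i=C(m_i,p_i)$. Each $G_i$ is a connected circulant graph of order $m_ip_i=n$, and by the computation above $D(G_i)=m_i+1=d_i$; the case $d_i=2$ just gives $G_i=C(1,p_i)=C_{p_i}$, which is consistent. This proves the theorem.

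The step I expect to be the main obstacle is the exact determination of the automorphism group, i.e.\ justifying that no automorphisms exist beyond $S_m\wr D_p$. This is where the small cases are genuinely different: for $p=4$ the two antipodal classes become twins and $C(m,4)$ collapses to the complete bipartite graph $K_{2m,2m}$, whose symmetry group, and distinguishing number, is strictly larger, and $p=3,5$ raise the necklace-breaking threshold from $2$ to $3$. Restricting to $p\ge 6$ is what makes both the structural identification and the two-colour necklace argument go through cleanly, and verifying that this restriction can be met simultaneously for all indices $i$ is precisely the reason for choosing $n$ as a large common multiple of the $m_i$.
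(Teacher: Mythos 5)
Your overall strategy is sound, and your final assembly ($m_i=d_i-1$, $n=6\,\mathrm{lcm}(m_1,\dots,m_r)$, $p_i=n/m_i\geq 6$) is in fact more robust than the paper's choice $p_i=\prod_{j\neq i}m_j$, since it sidesteps the exceptional cases $p\in\{1,2,3,4\}$ uniformly and handles $d_i=2$ (where $G_i$ degenerates to the cycle $C_{p_i}$) without special pleading. Your structural identification of $C(m,p)$ as the blow-up of $C_p$ by independent sets of size $m$ is correct and is essentially the content of the paper's Property \ref{proper}: the classes are stable modules, and since $1\equiv p+1+(m-1)p \pmod{mp}$ lies in $A$, consecutive classes are completely joined. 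Your lower bound $D\geq m+1$ is the same in spirit as Lemma \ref{borne}: with only $m$ colours every class is a rainbow, so a colour-matched translation (your version) or a conjugated reflection (the paper's $\delta^{-1}\circ\psi\circ\delta$) survives.

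Where you genuinely diverge is the upper bound, and that is also where your write-up has its one real gap: you derive $D(C(m,p))\leq m+1$ from the assertion $Aut(C(m,p))=S_m\wr D_p$, which you flag as the main obstacle but do not prove. The assertion is true for $p\geq 5$ --- the classes $R_j$ are precisely the equivalence classes of the automorphism-invariant relation ``same open neighbourhood,'' so every automorphism permutes them and induces an element of $Aut(C_p)=D_p$ on the quotient; it also follows from Sabidussi's theorem on lexicographic products --- but as written it is an unproven step on which the entire upper bound rests. The paper avoids computing the automorphism group altogether: Lemma \ref{D(G)} exhibits one explicit $(m+1)$-labelling and shows directly that any labelling-preserving automorphism fixes $0$ (its neighbourhood label-sequence is unique), then fixes the vertices of $P_1\cup\{2p-1\}$ (via distances to $0$), then fixes everything else (via neighbourhoods inside $\{0,\dots,p-1\}$, which is where $p\neq 4$ is used). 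So either supply the short twin-class argument identifying the automorphism group, or replace that step with a direct verification on a concrete labelling as the paper does; with that addition your proof is complete.
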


\noindent So, in section 1, basic definitions and preliminary results used in this paper are given. Then in section 2,
we define circulant graphs $C(m,p)$ , $n=m.p\geq 3$ and provide interesting structural properties of
this class of graphs.These later are used to determine the associated distinguishing number which is given in section 3.
We also give the proof of Theorem \ref{main} in the same section.
Finally, in section 4, we conclude by some remarks and possible improvement of reply of the question ${\mathcal{(Q)}}$.
\section{Definitions and Preliminaries Results}\label{sec:1}

We only consider finite, simple, loopless, and undirected graphs $G=(V ,E)$ where $V$
is the vertex set and $E$ is the edge set.
The \emph{complement} of $G$ is the simple graph $\overline{G}=(V,\overline{E})$ which consists of the same vertex
set $V$ of G. Two vertices $u$ and $v$ are adjacent in $\overline{G}$ if and only
if they are  not in $G$.
The \emph{neighborhood} of a vertex $u$, denoted by $N(u)$, consists in all the vertices $v$ which are adjacent to
$u$.
A \emph{complete graph} of order $n$, denoted $K_n$, is a graph having $n$ vertices such that all
two distinct vertices are adjacent. A \emph{path} on $n$ vertices,
denoted $P_n$, is a sequence of distinct
vertices  and and $n-1$ edges $v_iv_{i+1}$, $1 \leq i \leq n - 1$.
A path relying two distinct vertices $u$ and $v$ in $G$ is said $uv$-path.
A \emph{cycle}, on $n$ vertices denoted $C_n$, is a path with $n$ distinct vertices
$v_1, v_2, \dots, v_n$ where $v_1$ and $v_n$ are confused. For a graph $G$, the \emph{distance} $d_G(u, v)$
between vertices $u$ and $v$ is defined as the number of edges on a shortest
$uv$-path.\\
Given a subset $A \subset \mathbb{Z}_n$ with $0 \not \in A$ and for all $a\in A$ and $-a\in A$, a \emph{circulant graph}, is a graph on $n$ vertices $0,1,\dots,n-1$ where two vertices
$i$ and $j$ are adjacent if $j-i$ modulo $n$ is in $A$.

\noindent The \emph{automorphism} (or \emph{symmetry}) of a graph $G=(V,E)$
is a permutation $\sigma$ of the vertices of $G$ preserving adjacency i.e if $xy \in E$,
then $\sigma(x)\sigma(y) \in E$. The set of all automorphisms of $G$, noted $Aut(G)$
defines a structure of a group. A labeling of vertices  of a graph $G$, $c: V(G) \rightarrow \{1,2,\dots, r\}$ is
said $r$-\emph{distinguishing} of $G$ if $\forall \sigma \in Aut (G)\setminus \{Id_G\}$:
$c \neq c \circ \sigma$. That means that for each automorphism $\sigma \neq id $
there exists a vertex $v\in V$ such that $c(v)\neq c(\sigma(v))$.
A \emph{distinguishing number} of a graph $G$, denoted by $D(G)$, is
a smallest integer $r$ such that $G$  has an $r$-distinguishing labeling.
Since $Aut(G)=Aut(\overline{G})$, we have $D(G)=D(\overline{G})$.
The distinguishing number of a complete graph of order $n$ is equal to $n$.
The distinguishing number of complete multipartite graphs is given in the following theorem:
\begin{theorem} \cite{chrom}\label{multipartite}
Let $K_{a_1^{j_1} ,a_2^{j_2},\dots,a_r^{j_r}}$ denote the complete multipartite graph that has $j_i$
partite sets of size $a_i$ for $i = 1, 2,\dots,r$ and $a_1 > a_2 > \dots > a_r$.
Then $D(K_{a_1^{j_1} ,a_2^{j_2},\dots,a_r^{j_r}})= \min \{p :\binom{p}{a_i} \geqslant j_i$ for all $i \}$
\end{theorem}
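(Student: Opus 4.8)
The plan is to set $G=K_{a_1^{j_1},a_2^{j_2},\dots,a_r^{j_r}}$, pass to the complement, and reduce the whole question to labeling a disjoint union of cliques. Since $Aut(G)=Aut(\overline{G})$ gives $D(G)=D(\overline{G})$, and each part of $G$ is an independent set joined completely to every other part, the complement is the disjoint union $\overline{G}=\bigsqcup_{i=1}^{r} j_i\,K_{a_i}$, one clique per part. I would first pin down $Aut(\overline{G})$: every automorphism permutes the connected components, and a component being a clique can only be sent to a clique of the \emph{same} order, while inside a fixed component it may act by an arbitrary permutation. Hence $Aut(\overline{G})\cong \prod_{i=1}^{r}\big(S_{a_i}\wr S_{j_i}\big)$, where the factor $S_{a_i}\wr S_{j_i}$ permutes the $j_i$ cliques of order $a_i$ among themselves and independently permutes the $a_i$ vertices inside each of them.

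Next I would establish the lower bound, namely that any distinguishing labeling with $p$ colors forces $\binom{p}{a_i}\ge j_i$ for every $i$. Two observations suffice. First, within a single clique of order $a_i$ all colors must be pairwise distinct: if two vertices of the same clique shared a color, the transposition swapping them would be a nontrivial automorphism fixing the labeling. Thus the labeling assigns to each clique of order $a_i$ a well-defined $a_i$-subset of $\{1,\dots,p\}$. Second, among the $j_i$ cliques of order $a_i$ these subsets must be pairwise distinct: if two such cliques carried the same color set, matching their vertices by equal colors would give a color-preserving isomorphism between them, which extended by the identity elsewhere is a nontrivial automorphism preserving the labeling. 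Since there are only $\binom{p}{a_i}$ subsets of size $a_i$ in $\{1,\dots,p\}$, we must have $\binom{p}{a_i}\ge j_i$.

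For the upper bound I would exhibit a distinguishing labeling whenever $\binom{p}{a_i}\ge j_i$ holds for all $i$. For each size class $i$, choose $j_i$ pairwise distinct $a_i$-subsets of $\{1,\dots,p\}$ (possible exactly because $\binom{p}{a_i}\ge j_i$), assign one to each clique of order $a_i$, and color the $a_i$ vertices of that clique bijectively with the elements of its subset. To verify that only the identity preserves this labeling, let $\sigma$ be a color-preserving automorphism. Cliques of different orders cannot be interchanged, and the color sets of same-order cliques are pairwise distinct, so $\sigma$ must fix every clique setwise; since the colors inside each clique are distinct, $\sigma$ must then fix every vertex. Hence $p$ colors suffice.

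Combining the two bounds yields $D(G)=\min\{p:\binom{p}{a_i}\ge j_i \text{ for all } i\}$; the minimum is attained because each condition is monotone in $p$, and note that $\binom{p}{a_i}\ge j_i\ge 1$ already forces $p\ge a_i$, so distinct within-clique colorings are automatically available and no separate hypothesis on $p$ is needed. The main obstacle is the careful justification of the group description together with the extension step in the necessity argument, i.e. checking that a local color-preserving matching between two equally colored cliques really extends to a genuine nontrivial automorphism of the entire graph; once $Aut(\overline{G})$ is correctly identified as the product of wreath products, the remaining counting of $a_i$-subsets is routine.
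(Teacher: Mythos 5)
Your proof is correct, but there is nothing in the paper to compare it against: Theorem \ref{multipartite} is quoted from Collins and Trenk \cite{chrom} and the paper supplies no proof of it. Your argument --- passing to the complement $\overline{G}=\bigsqcup_{i=1}^{r} j_i\,K_{a_i}$, forcing distinct colors inside each clique, forcing distinct color sets among the $j_i$ cliques of order $a_i$ (giving $\binom{p}{a_i}\geq j_i$), and conversely assigning pairwise distinct $a_i$-subsets to build a distinguishing labeling --- is the standard proof of this result and is complete; the explicit identification $Aut(\overline{G})\cong\prod_i\bigl(S_{a_i}\wr S_{j_i}\bigr)$ is more than you need, since the lower bound only uses that within-clique transpositions and color-matched swaps of equal-size cliques are automorphisms, and the upper bound only uses that every automorphism permutes components of equal order.
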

Let us introduce the concept of modules useful to investigate distinguishing number in graphs.
A \emph{module} in the graph $G$ is a subset $M$ of vertices which share the same neighborhood outside $M$ i.e
for all $y \in V \setminus M$: $M \subseteq N(y)$ or $xy \not \in E$  for all $x\in M$.
A trivial module in a graph $G$ is either the set $V$ or
any singleton vertex. A module $M$ of $G$
is said \emph{maximal} in $G$ if for each non trivial module $M'$  in $G$ containing $M$, $M'$ is reduced to $M$. The following
lemma shows how modules can help us to estimate the value of distinguishing number in graphs:
\begin{lemma}\label{module}
Let $G$ be a graph and $M$ a module of $G$. Then, $D(G)\geq D(M)$
\end{lemma}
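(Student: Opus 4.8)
The plan is to exhibit a group embedding of $Aut(M)$ (the automorphism group of the subgraph induced by the module $M$) into $Aut(G)$, and then to show that any distinguishing labeling of $G$ restricts to a distinguishing labeling of $M$. The inequality $D(G)\geq D(M)$ then follows immediately by comparing the number of labels used.

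First I would set up the extension map. Given any $\sigma \in Aut(M)$, define $\widehat{\sigma}\colon V \rightarrow V$ by $\widehat{\sigma}(x)=\sigma(x)$ for $x\in M$ and $\widehat{\sigma}(y)=y$ for $y\in V\setminus M$. The claim is that $\widehat{\sigma}\in Aut(G)$. This is the step where the module hypothesis is essential, and I expect it to be the only genuine obstacle. Edges with both endpoints in $M$ are preserved because $\sigma$ is an automorphism of the induced subgraph, and edges with both endpoints outside $M$ are preserved because $\widehat{\sigma}$ fixes those vertices pointwise. The delicate case is an edge $xy$ with $x\in M$ and $y\in V\setminus M$: by the definition of a module, every vertex of $M$ has the same adjacency relation to $y$, so $xy\in E$ forces $\sigma(x)y\in E$ as well (both $x$ and $\sigma(x)$ lie in $M$), and conversely; hence incidences across the boundary are preserved. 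Thus $\widehat{\sigma}$ preserves adjacency and is a bijection, so it is an automorphism of $G$. Moreover the map $\sigma \mapsto \widehat{\sigma}$ is injective and sends the nontrivial automorphisms of $M$ to nontrivial automorphisms of $G$, since a nontrivial $\sigma$ already moves a vertex of $M$.

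Next I would take a distinguishing labeling $c\colon V(G)\rightarrow\{1,\dots,r\}$ realizing $r=D(G)$, and consider its restriction $c\!\mid_M$ to the vertices of $M$. I would argue that $c\!\mid_M$ is distinguishing for $M$ by contraposition: suppose some nontrivial $\sigma\in Aut(M)$ satisfies $c\!\mid_M = c\!\mid_M\circ\,\sigma$. Extending $\sigma$ to $\widehat{\sigma}\in Aut(G)$ as above, one checks that $c=c\circ\widehat{\sigma}$ on all of $V$: on $M$ this is the assumed equality, and on $V\setminus M$ it holds because $\widehat{\sigma}$ is the identity there. Since $\widehat{\sigma}$ is a nontrivial automorphism of $G$, this contradicts the fact that $c$ is distinguishing for $G$.

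Finally I would conclude. The restriction $c\!\mid_M$ is a distinguishing labeling of $M$ using at most $r$ labels, so $D(M)\leq r=D(G)$, which is exactly the desired inequality. No separate estimates or case analysis beyond the boundary-edge verification are needed, and the argument does not use any special structure of $G$ other than that $M$ is a module.
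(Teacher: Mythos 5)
Your proof is correct and follows essentially the same approach as the paper: extend a nontrivial automorphism of $M$ by the identity on $V\setminus M$, using the module property to see this yields an automorphism of $G$ that preserves the labeling. The only difference is that you argue contrapositively from a distinguishing labeling of $G$ while the paper starts from a labeling with fewer than $D(M)$ labels, and you spell out the boundary-edge verification that the paper leaves implicit.
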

\begin{proof}
 \noindent Let $c$ be an $r$-labeling such that $r<D(M)$.
Since $r<D(M)$, there exits $\delta\mid_{M}$ a non trivial automorphism of $M$
such that $c(x)=c(\delta\mid_{M}(x))$  for all $x \in M$ i.e the restriction
of $c$ in $M$ is not a distinguishing. Now, let $\delta$ be the extension of $\delta \mid_{M}$ to $G$
with $\delta(x)=x$ $\forall x \not \in M$ and $\delta(x)=\delta\mid_M(x)$ otherwise.
We get $c(x)=c(\delta(x))$ for all $x \in G$. Moreover, $\delta \neq id$ since $\delta\mid_{M} \neq id\mid_{M}$.
\end{proof}

\section{Circulant Graphs $C(m,p)$}\label{sec:2}
\noindent In this section, we study distinguishing number of circulant graphs $C(m,p)$ of order $n=m.p\geq3$ with
$m\geqslant 1$ and $p\geqslant 2$. A vertex $i$ is adjacent to $j$ in $C(m,p)$ iff $j-i$ modulo $n$ belongs to
$A=\{p-1+r.p, p+1+ r.p$, $0\leq r \leq m-1\}$ (See Fig. \ref{weakly}).
When $p>1$, these graphs are circulant since for all $0 \leq r\leq  m-1$
the symmetric of $p-1+r.p$ is $1+p+(m-r-2)p$ which belongs to $A$ and $p>1$ implies that $0\notin A$.
By construction, set $C(m,1)$ is the clique $K_m$.
Let specify some other particular values of $p$ and $m$, $C(1,p)$ is the cycle $C_p$.
Also we have: $C(m,2)=K_{m,m}$ and $C(m,3)=K_{m,m,m}$.
By Theorem \ref{multipartite}, $D(C(m,2))=D(C(m,3))=m+1$. Moreover, $D(C(1,p))=2$ for $p\geq6$.
\begin{pr}\label{proper}
The vertex set  of $C(m,p)$ ($m\geqslant 2$ and $p\geqslant 2$) can be partitioned into $p$ stable 
modules $M_i=\{i+r.p:$ $ 0\leq r \leq m-1 \}$
of size $m$ for $i=0,\dots,p-1$.
\end{pr}
\begin{proof}
 Given two distinct vertices $a, b \in M_i$ for  $i=0,\dots,p-1$, $a-b\equiv rp[n]$ for some $0<r \leqslant m-1$ ,
then $a-b \notin A$ which proves that each $M_i$ induces a stable sets.

Moreover, it is clear that $\{M_i \}_{i=0,\dots, p-1}$ forms a partition of vertex set of $C(m,p)$.\\
Let us prove that $M_i$ defines a module. For this, suppose that $a=i+r_{a}\cdot p$ and $b=i+r_{b}\cdot p$ two
distinct vertices of a given stable set $M_i$.\\
Let $c \in V\setminus M_i$ such that $ac$  is an edge and let $c=j+r_{c}\cdot p$.\
Let

$ r_{bc}=\left \{ \begin{array}{ll}
     r_b-r_c  & \mbox{if  } r_b> r_c \\
     m+(r_b - r_c) & \mbox{else  }
\end{array}
\right.
$
\hspace{12mm} $r_{ac}= \left \{
\begin{array}{ll}
    r_a-r_c & \mbox{if  }  r_a> r_c \\
    m+(r_a - r_c) &\mbox{else}
\end{array}
\right.
$

two integer numbers such that $b-c\equiv (i-j)+r_{bc}\cdot p[n]$ and
$a-c\equiv (i-j)+r_{ac}\cdot p[n]$ (with $0 \leqslant r_{ac} \leqslant m-1$ and $0 \leqslant r_{bc} \leqslant m-1$.)\\
Since $a-c$ is in $A$ then there is some integers $k$ verifying $0\leqslant k\leqslant r_{ac}$ such that $i-j+kp=p-1$ (or= $p+1$).\\

If $k\leqslant r_{bc}$,  we obtain $b-c\equiv i-j+kp+(r_{bc}-k)\cdot p[n]$.\\
Then $b-c \equiv p-1+(r_{bc}-k)\cdot p[n]$ (or $\equiv p+1+(r_{bc}-k)\cdot p[n]$).
We deduce that $b-c \in A$ since $0\leqslant k \leqslant m-1$.\\

Else, we have $r_{bc} < k \leqslant m+r_{bc}$.
We have $b-c \equiv i-j+r_{bc}\cdot p[n]$.\
Then $b-c \equiv i-j+(m+r_{bc})\cdot p[n]$.
We get $b-c \equiv i-j +kp+(m+r_{bc}-k)\cdot p[n]$ which belongs to $A$ since $0\leqslant m+r_{bc}-k \leqslant m-1$.\\
\end{proof}
\begin{figure}[ht]
\begin{center}
\begin{pspicture}(1,2)(15.5,7.5)
\pscircle(3,5){2}
\pscircle[fillstyle=solid,fillcolor=black](3,7){0.07}
\pscircle[fillstyle=solid,fillcolor=black](3,3){0.07}
\pscircle[fillstyle=solid,fillcolor=red](4.7,6){0.07}
\pscircle[fillstyle=solid,fillcolor=green](4.7,4){0.07}
\pscircle[fillstyle=solid,fillcolor=green](1.3,6){0.07}
\pscircle[fillstyle=solid,fillcolor=red](1.3,4){0.07}
\psline[linestyle=solid](3,7)(4.7,4)
\psline[linestyle=solid](4.7,4)(1.3,4)
\psline[linestyle=solid](1.3,4)(3,7)
\psline[linestyle=solid](4.7,6)(3,3)
\psline[linestyle=solid](3,3)(1.3,6)
\psline[linestyle=solid](1.3,6)(4.7,6)
\uput[45](2,1.8){$C(2,3)$}
\pscircle(11,5){2}
\pscircle[fillstyle=solid,fillcolor=black](11,7){0.07}
\pscircle[fillstyle=solid,fillcolor=black](12.8,4.2){0.07}
\pscircle[fillstyle=solid,fillcolor=black](9.2,4.2){0.07}
\pscircle[fillstyle=solid,fillcolor=blue](11.8,6.8){0.07}
\pscircle[fillstyle=solid,fillcolor=green](12.4,6.4){0.07}
\pscircle[fillstyle=solid,fillcolor=red](12.8,5.8){0.07}
\pscircle[fillstyle=solid,fillcolor=white](13,4.9){0.07}
\pscircle[fillstyle=solid,fillcolor=blue](12.3,3.5){0.07}
\pscircle[fillstyle=solid,fillcolor=green](11.5,3.08){0.07}
\pscircle[fillstyle=solid,fillcolor=red](10.5,3.08){0.07}
\pscircle[fillstyle=solid,fillcolor=white](9.7,3.5){0.07}
\pscircle[fillstyle=solid,fillcolor=blue](9,5){0.07}
\pscircle[fillstyle=solid,fillcolor=green](9.1,5.6){0.07}
\pscircle[fillstyle=solid,fillcolor=red](9.5,6.3){0.07}
\pscircle[fillstyle=solid,fillcolor=white](10.2,6.83){0.07}
\psline[linestyle=solid](11,7)(13,4.9)
\psline[linestyle=solid](11,7)(12.3,3.5)
\psline[linestyle=solid](11,7)(9.7,3.5)
\psline[linestyle=solid](11,7)(9,5)
\psline[linestyle=solid](11.8,6.8)(12.8,4.2)
\psline[linestyle=solid](11.8,6.8)(9.2,4.2)
\psline[linestyle=solid](11.8,6.8)(11.5,3.08)
\psline[linestyle=solid](11.8,6.8)(9.1,5.6)
\psline[linestyle=solid](12.4,6.4)(12.3,3.5)
\psline[linestyle=solid](12.4,6.4)(10.5,3.08)
\psline[linestyle=solid](12.4,6.4)(9,5)
\psline[linestyle=solid](12.4,6.4)(9.5,6.3)
\psline[linestyle=solid](12.8,5.8)(11.5,3.08)
\psline[linestyle=solid](12.8,5.8)(9.7,3.5)
\psline[linestyle=solid](12.8,5.8)(10.2,6.83)
\psline[linestyle=solid](12.8,5.8)(9.1,5.6)
\psline[linestyle=solid](13,4.9)(9.5,6.3)
\psline[linestyle=solid](13,4.9)(10.5,3.08)
\psline[linestyle=solid](13,4.9)(9.2,4.2)
\psline[linestyle=solid](12.8,4.2)(9.7,3.5)
\psline[linestyle=solid](12.8,4.2)(9,5)
\psline[linestyle=solid](12.8,4.2)(10.2,6.83)
\psline[linestyle=solid](12.3,3.5)(9.2,4.2)
\psline[linestyle=solid](12.3,3.5)(9.1,5.6)
\psline[linestyle=solid](11.5,3.08)(9,5)
\psline[linestyle=solid](11.5,3.08)(9.5,6.3)
\psline[linestyle=solid](10.5,3.08)(9.1,5.6)
\psline[linestyle=solid](10.5,3.08)(10.2,6.83)
\psline[linestyle=solid](9.7,3.5)(9.5,6.3)%
\psline[linestyle=solid](9.2,4.2)(10.2,6.83)
\uput[45](10,1.8){$C(3,5)$}
\end{pspicture}
\caption{ Circulant graphs: the vertices of the same color are in the same module.}
\label{weakly}
\end{center}
\end{figure}
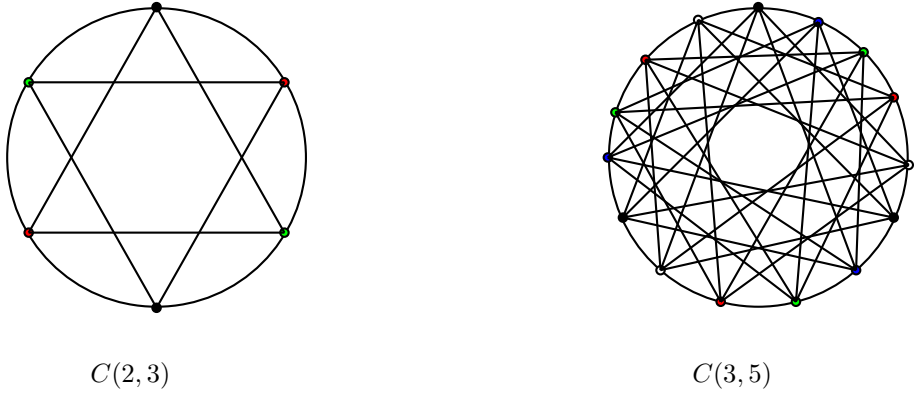

\noindent Since each $M_i$ (for all $0\leqslant i\leqslant p-1$) is a stable set then, by definition of a module, we have:

\begin{pr} \label{permutation}
Any permutation of elements of $M_i$ is an automorphism of $G$ for all $0\leqslant i \leqslant p-1$.
\qed \end{pr}

\noindent By Lemma \ref{module} and Property \ref{proper}, we have $D(C(m,p))\geqslant m$. We will improve this bound:

\begin{theorem} \label{principal}
For all $p \geq 2$ and for all  $m \geq2$, $D(C(m,p)) = m+1$ if $p\neq 4$.
\end{theorem}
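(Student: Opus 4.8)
The plan is to prove the two inequalities separately: the lower bound $D(C(m,p))\geq m+1$, which I will establish for every $p\geq 2$, and the upper bound $D(C(m,p))\leq m+1$, which is where the hypothesis $p\neq 4$ enters. Throughout I will use the structure furnished by Property \ref{proper}: the modules $M_0,\dots,M_{p-1}$ partition $V(C(m,p))$ into independent sets, and one checks that a vertex of $M_i$ is adjacent to a vertex of $M_j$ exactly when $j-i\equiv\pm1\pmod p$, with the join being complete whenever it is nonempty. Consequently, contracting each $M_i$ to a single point yields precisely the cycle $C_p$, so $C(m,p)$ is the blow-up of $C_p$ in which every vertex is replaced by an independent set of size $m$ and consecutive sets are completely joined.

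For the lower bound, I fix an arbitrary labeling $c$ using at most $m$ colors. If some module $M_i$ carries a repeated color, then by Property \ref{permutation} the transposition of two equally-labeled vertices of $M_i$ is a nontrivial automorphism preserving $c$. Otherwise each $M_i$ is rainbow and hence uses the full palette $\{1,\dots,m\}$; I then take any nontrivial $\bar\sigma\in Aut(C_p)$ (for instance the rotation $i\mapsto i+1$, which moves $M_0$ to $M_1\neq M_0$ for all $p\geq 2$) and lift it to the map $\sigma$ sending each $M_i$ to $M_{\bar\sigma(i)}$ by the unique color-matching bijection. Since adjacent modules are sent to adjacent modules and every nonempty join is complete, $\sigma$ is a nontrivial color-preserving automorphism. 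In either case $c$ is not distinguishing, so $D(C(m,p))\geq m+1$.

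For the upper bound with $p\neq 4$, the cases $p\in\{2,3\}$ give $C(m,2)=K_{m,m}$ and $C(m,3)=K_{m,m,m}$, already handled by Theorem \ref{multipartite}. For $p\geq 5$ the key structural step is that every automorphism preserves the partition $\{M_i\}$: two vertices have equal neighborhoods iff they lie in the same $M_i$, because $N(M_i)=M_{i-1}\cup M_{i+1}$ and an equality $\{i-1,i+1\}=\{j-1,j+1\}\pmod p$ with $i\neq j$ would force $4\equiv 0\pmod p$, impossible for $p\geq 5$. Since having equal neighborhoods is an automorphism-invariant relation, every $\sigma\in Aut(C(m,p))$ permutes the modules, inducing some $\bar\sigma\in Aut(C_p)$, and $\sigma$ is then determined by $\bar\sigma$ together with the bijections $M_i\to M_{\bar\sigma(i)}$.

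Finally I construct a distinguishing $(m+1)$-labeling: color each module with $m$ distinct labels from $\{1,\dots,m+1\}$, so that $M_i$ omits exactly one label $\omega_i$, and choose the sequence $(\omega_0,\dots,\omega_{p-1})$ to be a distinguishing coloring of the cycle $C_p$. This is feasible since $D(C_p)\leq 3\leq m+1$ (the value is $2$ for $p\geq 6$ and $3$ for $p=5$), while $m\geq 2$ makes $m+1\geq 3$ labels available. Now let $\sigma$ preserve this labeling, with induced $\bar\sigma\in Aut(C_p)$. Because each module is rainbow, a color-preserving bijection $M_i\to M_{\bar\sigma(i)}$ forces $\omega_i=\omega_{\bar\sigma(i)}$ for all $i$, i.e. $\bar\sigma$ fixes the coloring $(\omega_i)$ of $C_p$, whence $\bar\sigma=id$ by the choice of a distinguishing coloring; and then rainbowness forces each $M_i\to M_i$ to be the identity, so $\sigma=id$. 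Hence $D(C(m,p))\leq m+1$, completing the proof. The main obstacle is precisely the structural step that isolates $p=4$: only there does $4\equiv 0\pmod p$ hold, so the nonadjacent modules $M_i$ and $M_{i+2}$ share neighborhoods and merge (indeed $C(m,4)=K_{2m,2m}$), which both enlarges $Aut$ and, consistently, pushes the distinguishing number up to $2m+1$.
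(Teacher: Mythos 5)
Your proof is correct, and while your lower bound is essentially the paper's, your upper bound takes a genuinely different route. For the bound $D(C(m,p))>m$, both you and Lemma \ref{borne} argue that an $m$-labeling either repeats a color inside some module (transposition) or makes every module rainbow, in which case a nontrivial automorphism of the quotient cycle can be lifted to a color-matching automorphism of the blow-up; the paper lifts the reflection $\psi(i+rp)=p-(i+1)+rp$ conjugated by a color-sorting map $\delta$, you lift a rotation, but the mechanism is identical. For the bound $D(C(m,p))\leq m+1$, the paper (Lemma \ref{D(G)}) exhibits one explicit $(m+1)$-labeling (labels $1$ and $2$ arranged asymmetrically on $\{0,\dots,p-1\}\cup\{2p-1\}$, label $j+1$ on $P_j$) and verifies by a case analysis on neighborhood label-sequences and distances to $0$ that any label-preserving automorphism fixes every vertex. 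You instead prove the structural fact that for $p\geq 5$ the modules are exactly the equal-neighborhood classes (since $\{i-1,i+1\}=\{j-1,j+1\}$ with $i\neq j$ forces $4\equiv 0\pmod p$), so every automorphism induces an automorphism of the quotient cycle $C_p$, and you then encode a distinguishing coloring of $C_p$ in the colors omitted from each rainbow module, using $D(C_p)\leq 3\leq m+1$. Your argument is more conceptual: it reduces the problem to the known distinguishing number of cycles, makes the role of the hypothesis $p\neq 4$ completely transparent (only for $p=4$ do the classes $M_i$ and $M_{i+2}$ merge, giving $K_{2m,2m}$ as in Corollary \ref{p4}), and avoids the paper's table of neighborhood sequences; the paper's construction is more self-contained in that it does not invoke the values $D(C_5)=3$ and $D(C_p)=2$ for $p\geq 6$, at the cost of a longer hands-on verification. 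Both arguments are valid.
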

\section{Proof of Theorem \ref{main} and Theorem  \ref{principal}}\label{sec:3}
In this section, we give the proof of Theorem \ref{principal} in the first step,
while the second step is spent to give the proof of the Theorem \ref{main}

\begin{lemma} \label{borne}
For all $p \geq 2$ and for all  $m \geq2$, $D(C(m,p)) > m$.
\end{lemma}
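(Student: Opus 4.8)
The plan is to prove the strict inequality $D(C(m,p)) > m$ by contradiction: assume $c$ is a distinguishing labeling using only $m$ labels and produce a nontrivial automorphism $\sigma$ with $c = c \circ \sigma$. I would split into two cases according to how $c$ behaves on the stable modules $M_i = \{i + rp : 0 \leq r \leq m-1\}$ supplied by Property \ref{proper}, each of which has exactly $m$ vertices.

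First, suppose some module $M_i$ receives a repeated label, i.e. there are distinct $u, v \in M_i$ with $c(u) = c(v)$. By Property \ref{permutation} the transposition exchanging $u$ and $v$ and fixing every other vertex is an automorphism of $C(m,p)$; it is nontrivial and satisfies $c = c \circ \sigma$, contradicting that $c$ is distinguishing. Hence I may assume the remaining (hard) case: every module $M_i$ is \emph{rainbow}, meaning the restriction of $c$ to $M_i$ is a bijection onto the $m$ labels.

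In the rainbow case the essential structural input is that adjacency in $C(m,p)$ depends only on the module indices: for $u \in M_i$ and $v \in M_j$, whether $uv$ is an edge is governed solely by $i - j \bmod p$ (concretely, $A$ is exactly the set of residues congruent to $\pm 1$ modulo $p$, so the ``$r$-coordinate'' is irrelevant). Equivalently, since $C(m,p)$ is circulant the translation $x \mapsto x+1$ is an automorphism, and it shifts the modules cyclically, $M_i \mapsto M_{i+1 \bmod p}$. I would then define $\phi$ to be this cyclic module shift, corrected inside each target module so as to preserve colors: for $v \in M_i$, let $\phi(v)$ be the unique vertex of $M_{i+1 \bmod p}$ with $c(\phi(v)) = c(v)$, which is well defined because $M_{i+1}$ is rainbow. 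Formally $\phi = \pi \circ \tau$, where $\tau$ is the translation and $\pi \in (S_m)^p$ is the within-module correction; both are automorphisms (the latter by composing the single-module automorphisms of Property \ref{permutation} over disjoint vertex sets), so $\phi$ is an automorphism.

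Finally I would check that $\phi$ is color-preserving by construction and nontrivial, since it maps $M_0$ into $M_1 \neq M_0$ (as $p \geq 2$) and therefore fixes no vertex of $M_0$; this yields $c = c \circ \phi$ with $\phi \neq \mathrm{id}$ and the desired contradiction. I expect the only real obstacle to be the structural claim that adjacency depends only on the module indices, i.e. verifying that the cyclic module shift genuinely respects edges; once that is in hand (it also explains why the argument is uniform in $p$ and in particular covers $p = 4$), both cases close immediately and give $D(C(m,p)) > m$, strengthening the bound $D(C(m,p)) \geq m$ already obtained from Lemma \ref{module} and Property \ref{proper}.
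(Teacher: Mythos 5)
Your proof is correct and follows essentially the same strategy as the paper's: first force every module $M_i$ to be rainbow using transpositions (Property \ref{permutation}), then build a nontrivial colour-preserving automorphism by composing an automorphism that permutes the modules with within-module corrections that restore the colours. The only substantive difference is your choice of the cyclic translation $x\mapsto x+1$ where the paper conjugates the reflection $\psi(i+rp)=p-(i+1)+rp$ by its colour-sorting map $\delta$; your variant has the minor advantage of treating all $p\geq 2$ uniformly, whereas the paper disposes of $p\in\{2,3\}$ separately via Theorem \ref{multipartite}.
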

\begin{proof}
 \noindent If $p=2$ (resp. $p=3)$ then $C(m,2)\cong K_{m,m}$ (resp. $C(m,3)\cong K_{m,m,m}$).
According to Theorem \ref{multipartite}, we have $D(C(m,p))>m$.
Let $C(m,p)$ be the circulant graph generated by $A=\{p-1+rp, p+1+rp: 0\leqslant r \leqslant m-1\}$.

\noindent Let us suppose that $p>3$. Since the modules $M_i$ $(i=0,\dots, p-1)$ are stables of size $m$, then
by Lemma \ref{module} we have $D(C(m,p))\geq m$.\\
Consider $c:V(C(m,p))\rightarrow \{1,2,\dots,m\}$ be a $m$-labeling of $C(m,p)$ $(m \geq 2)$
and prove that $c$ is not $m$-distinguishing.\\
By way of contradiction, assume that $c$ is $m$-distinguishing.

\noindent For all distinct vertices $v$, $w$ in a given module $M_{i_0}$ with $i_0\in \{0,1,\dots,p-1\}$ we have
$c(v)\neq c(w)$ otherwise, there exists a transposition $\tau$ of $v$ and $w$ verifying $c=c \circ \tau$.
This yields a contradiction. That means that
in a fixed module $M_i$ we have all labels.

\noindent Let $P_j$  ($1\leqslant j \leqslant m$) be a set of index $\{(j-1)p+i,i \in \{0, \dots, p-1\} \}$.

\noindent Let $v\in M_i$ ( $0\leqslant i\leqslant p-1$) then $v=i+rp$ where $0\leqslant r \leqslant m-1$.
\noindent  Consider now the mapping $\delta_i$ with $i=0,\dots,p-1$ defined as follows:
$\delta_i: V \rightarrow V$ such that $\delta_i(v)=(c(v)-1)p+i$ if $v \in M_i$ else $\delta (v)=v$. By Property \ref{permutation}, $\delta_i$
defines an automorphism of $G$.\\
\noindent Let  $\delta = \delta_0 \circ \dots \circ \delta_{p-1}$ be an automorphism of $G$.

\noindent Let $\psi$ be a mapping defined as follows:
$\psi: V \rightarrow V$ such that $\psi(i+rp)= p-(i+1) + rp$. Let prove that $\psi$ is an automorphism of $G$.

\noindent Let $a=i+rp$ and $b=j+r'p$ two adjacent vertices then $b-a=j-i+(r'-r)p \in A$. We have $\psi(b)- \psi(a) = i-j +(r'-r)p$ which belongs to $A$. Thus $\psi$ is an automorphism of $G$.

\noindent Check now that $\delta ^{-1} \circ \psi \circ \delta$ is non trivial automorphism of $G$ preserving the labeling  $c$. See Fig. \ref{composition}.

\noindent Then $\delta ^{-1} \circ \psi \circ \delta$ is clearly an automorphism because it is a composition of automorphisms.

\noindent Since $\delta ^{-1} \circ \psi \circ \delta(0) = \delta ^{-1} \circ \psi ( (c(0)-1)p +0)
   = \delta ^{-1} ( (c(0)-1)p+(p-1)) = u$  with $u \in M_{p-1}$ and $c(u)=c(0)$, then $u\neq 0$ since $0 \in M_0$
and $M_0 \neq M_{p-1}$ and  $p> 1$.
\noindent Thus $\delta ^{-1} \circ \psi \circ \delta$ is not a trivial automorphism.

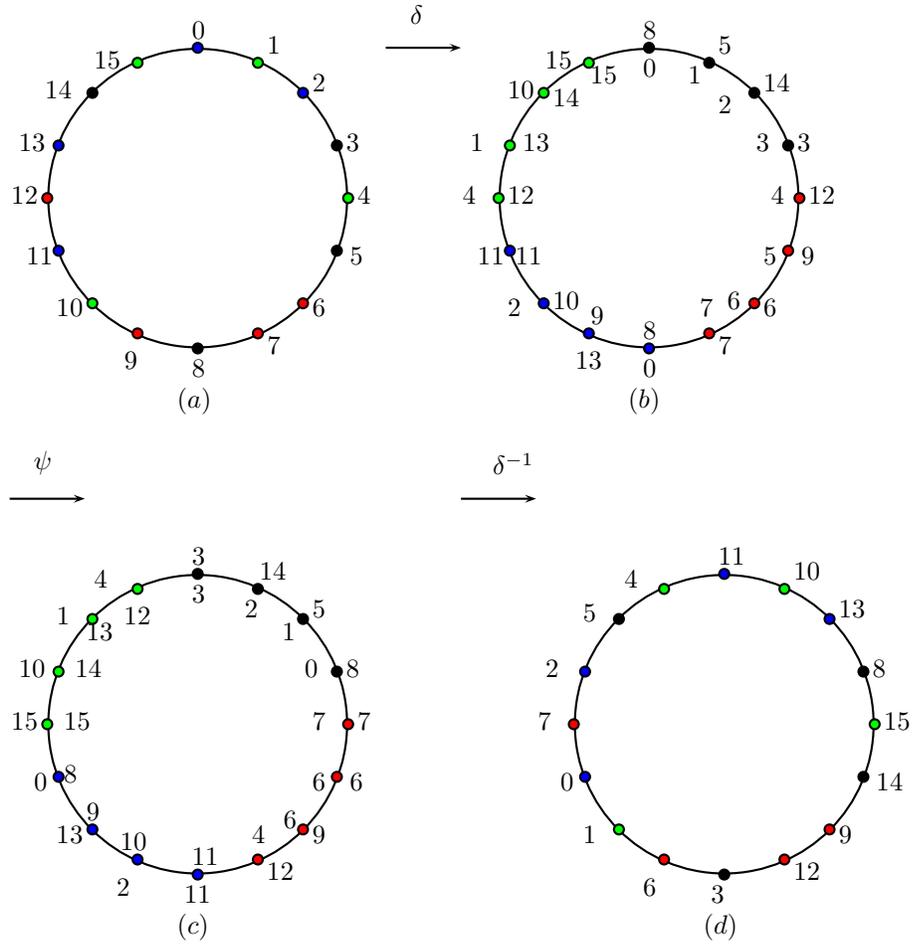
\begin{figure}
\begin{center}
\begin{pspicture}(0,2)(18,14)
\pscircle(3,11){2}
\pscircle[fillstyle=solid,fillcolor=red](1,11){0.08}\uput[45](0.4,10.8){$12$}
\pscircle[fillstyle=solid,fillcolor=green](5,11){0.08}\uput[45](5,10.8){$4$}
\pscircle[fillstyle=solid,fillcolor=black](3,9){0.08}\uput[45](2.8,8.5){$8$}
\pscircle[fillstyle=solid,fillcolor=blue](3,13){0.08} \uput[45](2.8,13){$0$}
\pscircle[fillstyle=solid,fillcolor=green](1.6,9.6){0.08}\uput[45](1,9.3){$10$}
\pscircle[fillstyle=solid,fillcolor=red](4.4,9.6){0.08}\uput[45](4.4,9.3){$6$}
\pscircle[fillstyle=solid,fillcolor=black](1.6,12.4){0.08}\uput[45](0.85,12.2){$14$}
\pscircle[fillstyle=solid,fillcolor=blue](4.4,12.4){0.08}\uput[45](4.4,12.3){$2$}
\pscircle[fillstyle=solid,fillcolor=red](2.2,9.2){0.08}\uput[45](1.9,8.6){$9$}
\pscircle[fillstyle=solid,fillcolor=red](3.8,9.2){0.08}\uput[45](3.8,8.8){$7$} 
\pscircle[fillstyle=solid,fillcolor=green](2.2,12.8){0.08}\uput[45](1.5,12.6){$15$} 
\pscircle[fillstyle=solid,fillcolor=green](3.8,12.8){0.08}\uput[45](3.8,12.8){$1$}
\pscircle[fillstyle=solid,fillcolor=black](4.85,11.7){0.08}\uput[45](4.85,11.5){$3$}
\pscircle[fillstyle=solid,fillcolor=black](4.85,10.3){0.08}\uput[45](4.9,10){$5$}
\pscircle[fillstyle=solid,fillcolor=blue](1.15,10.3){0.08}\uput[45](0.6,10){$11$}
\pscircle[fillstyle=solid,fillcolor=blue](1.15,11.7){0.08}\uput[45](0.5,11.5){$13$}
\uput[45](2.6,8){$(a)$}
\psline{->}(5.5,13)(6.5,13)
\uput[45](5.7,13.2){$\delta$}
\pscircle(9,11){2}
\pscircle[fillstyle=solid,fillcolor=green](7,11){0.08} \uput[45](6.4,10.8){$4$} \uput[45](7,10.8){$12$} 
\pscircle[fillstyle=solid,fillcolor=red](11,11){0.08} \uput[45](11,10.8){$12$} \uput[45](10.5,10.8){$4$}
\pscircle[fillstyle=solid,fillcolor=blue](9,9){0.08} \uput[45](8.8,8.5){$0$} \uput[45](8.8,9){$8$}
\pscircle[fillstyle=solid,fillcolor=black](9,13){0.08} \uput[45](8.8,13){$8$} \uput[45](8.8,12.5){$0$}
\pscircle[fillstyle=solid,fillcolor=blue](7.6,9.6){0.08} \uput[45](7,9.3){$2$} \uput[45](7.6,9.4){$10$}
\pscircle[fillstyle=solid,fillcolor=red](10.4,9.6){0.08} \uput[45](10.4,9.3){$6$} \uput[45](9.92,9.4){$6$}
\pscircle[fillstyle=solid,fillcolor=green](7.6,12.4){0.08} \uput[45](7,12.2){$10$} \uput[45](7.6,12.1){$14$}
\pscircle[fillstyle=solid,fillcolor=black](10.4,12.4){0.08} \uput[45](10.4,12.3){$14$} \uput[45](9.8,12){$2$}
\pscircle[fillstyle=solid,fillcolor=blue](8.2,9.2){0.08} \uput[45](7.9,8.6){$13$} \uput[45](8.1,9.2){$9$}
\pscircle[fillstyle=solid,fillcolor=red](9.8,9.2){0.08} \uput[45](9.8,8.8){$7$} \uput[45](9.55,9.3){$7$}
\pscircle[fillstyle=solid,fillcolor=green](8.2,12.8){0.08} \uput[45](7.5,12.6){$15$} \uput[45](8.1,12.4){$15$}
\pscircle[fillstyle=solid,fillcolor=black](9.8,12.8){0.08} \uput[45](9.8,12.8){$5$} \uput[45](9.4,12.4){$1$}
\pscircle[fillstyle=solid,fillcolor=black](10.85,11.7){0.08} \uput[45](10.85,11.5){$3$} \uput[45](10.3,11.5){$3$} 
\pscircle[fillstyle=solid,fillcolor=red](10.85,10.3){0.08} \uput[45](10.9,10){$9$} \uput[45](10.4,10){$5$}
\pscircle[fillstyle=solid,fillcolor=blue](7.15,10.3){0.08} \uput[45](6.6,10){$11$} \uput[45](7.1,10){$11$}
\pscircle[fillstyle=solid,fillcolor=green](7.15,11.7){0.08} \uput[45](6.5,11.5){$1$} \uput[45](7.2,11.5){$13$}
\uput[45](8.6,8){$(b)$}
\psline{->}(0.5,7)(1.5,7)
\uput[45](0.7,7.2){$\psi$}
\pscircle(3,4){2}
\pscircle[fillstyle=solid,fillcolor=green](1,4){0.08} \uput[45](0.4,3.8){$15$} \uput[45](1.1,3.8){$15$}
\pscircle[fillstyle=solid,fillcolor=green](1.15,4.7){0.08} \uput[45](0.5,4.5){$10$} \uput[45](1.25,4.5){$14$}
\pscircle[fillstyle=solid,fillcolor=green](1.6,5.4){0.08} \uput[45](1,5.2){$1$} \uput[45](1.4,5){$13$}
\pscircle[fillstyle=solid,fillcolor=green](2.2,5.8){0.08} \uput[45](1.5,5.7){$4$} \uput[45](1.9,5.2){$12$}
\pscircle[fillstyle=solid,fillcolor=red](5,4){0.08} \uput[45](5,3.8){$7$} \uput[45](4.4,3.8){$7$}
\pscircle[fillstyle=solid,fillcolor=red](4.85,3.3){0.08} \uput[45](4.9,3){$6$} \uput[45](4.4,3){$6$}
\pscircle[fillstyle=solid,fillcolor=red](4.4,2.6){0.08} \uput[45](4.4,2.3){$9$} \uput[45](4.02,2.5){$6$}
\pscircle[fillstyle=solid,fillcolor=red](3.8,2.2){0.08} \uput[45](3.8,1.8){$12$} \uput[45](3.6,2.3){$4$}
\pscircle[fillstyle=solid,fillcolor=blue](3,2){0.08} \uput[45](2.7,1.5){$11$} \uput[45](2.8,2){$11$}
\pscircle[fillstyle=solid,fillcolor=blue](2.2,2.2){0.08} \uput[45](1.8,1.6){$2$}\uput[45](1.85,2.2){$10$}
\pscircle[fillstyle=solid,fillcolor=blue](1.6,2.6){0.08} \uput[45](1,2.3){$13$}\uput[45](1.4,2.6){$9$}
\pscircle[fillstyle=solid,fillcolor=blue](1.15,3.3){0.08} \uput[45](0.7,3){$0$} \uput[45](1.1,3.1){$8$}
\pscircle[fillstyle=solid,fillcolor=black](3.8,5.8){0.08} \uput[45](3.7,5.8){$14$} \uput[45](3.5,5.3){$2$}
\pscircle[fillstyle=solid,fillcolor=black](3,6){0.08} \uput[45](2.8,6){$3$} \uput[45](2.8,5.5){$3$}
\pscircle[fillstyle=solid,fillcolor=black](4.4,5.4){0.08} \uput[45](4.4,5.3){$5$} \uput[45](4,5){$1$}
\pscircle[fillstyle=solid,fillcolor=black](4.85,4.7){0.08} \uput[45](4.85,4.5){$8$} \uput[45](4.3,4.5){$0$}
\uput[45](6.8,7.2){$\delta^{-1}$}
\psline{->}(6.5,7)(7.5,7)
\uput[45](2.6,1){$(c)$}
\pscircle(10,4){2}
\pscircle[fillstyle=solid,fillcolor=red](8,4){0.08} \uput[45](7.4,3.8){$7$} 
\pscircle[fillstyle=solid,fillcolor=green](12,4){0.08} \uput[45](7.5,4.5){$2$} 
\pscircle[fillstyle=solid,fillcolor=black](10,2){0.08} \uput[45](8,5.25){$5$} 
\pscircle[fillstyle=solid,fillcolor=blue](10,6){0.08} \uput[45](8.55,5.7){$4$} 
\pscircle[fillstyle=solid,fillcolor=green](8.6,2.6){0.08} \uput[45](12,3.8){$15$} 
\pscircle[fillstyle=solid,fillcolor=red](11.4,2.6){0.08} \uput[45](11.9,3){$14$} 
\pscircle[fillstyle=solid,fillcolor=black](8.6,5.4){0.08} \uput[45](11.4,2.3){$9$}
\pscircle[fillstyle=solid,fillcolor=blue](11.4,5.4){0.08} \uput[45](10.8,1.8){$12$} 
\pscircle[fillstyle=solid,fillcolor=red](9.2,2.2){0.08} \uput[45](9.7,1.5){$3$} 
\pscircle[fillstyle=solid,fillcolor=red](10.8,2.2){0.08} \uput[45](8.8,1.6){$6$} 
\pscircle[fillstyle=solid,fillcolor=green](9.2,5.8){0.08} \uput[45](8,2.3){$1$} 
\pscircle[fillstyle=solid,fillcolor=green](10.8,5.8){0.08} \uput[45](7.7,3){$0$} 
\pscircle[fillstyle=solid,fillcolor=black](11.85,4.7){0.08} \uput[45](10.8,5.8){$10$} 
\pscircle[fillstyle=solid,fillcolor=black](11.85,3.3){0.08} \uput[45](9.8,6){$11$} 
\pscircle[fillstyle=solid,fillcolor=blue](8.15,3.3){0.08} \uput[45](11.4,5.3){$13$} 
\pscircle[fillstyle=solid,fillcolor=blue](8.15,4.7){0.08} \uput[45](11.85,4.5){$8$} 
\uput[45](9.6,1){$(d)$}
\end{pspicture}
\vspace{5mm}\caption{The automorphism $\delta^{-1}\circ\psi\circ\delta$ applied to $C(4,4)$ with four labels (1,2,3,4)=(black,red, blue,green).}
\label{composition}
\end{center}
\end{figure}

\noindent To complete the proof, it is enough to show that $c(u)=c(\delta ^{-1} \circ \psi \circ \delta(u))$ for all vertex $u$.\\
\noindent Let $u=i+rp$ then we have $\delta ^{-1} \circ \psi \circ \delta (u) =\delta ^{-1} \circ \psi ((c(u)-1)p +i)=
\delta ^{-1} ((c(u)-1)p+ p-(i+1)) =v$ such that $v\in M_{p-(i+1)}$ and $c(v)=c(u)$.

\noindent Then $\delta ^{-1} \circ \psi \circ \delta$ preserves the labeling.

\end{proof}

\vspace{3mm}\noindent The following result gives the exact value of $D(C(m,p))$

\begin{lemma}\label{D(G)}
For all $p\geq 2$ and $p\neq4$ and for all $m \geq 2$ : $D(C(m,p)) \leqslant m+1$
\end{lemma}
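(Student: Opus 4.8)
The plan is to exhibit one explicit $(m+1)$-labeling $c$ and to show that no nontrivial automorphism preserves it. First I would record the only constraint that Property~\ref{permutation} forces on $c$: since every permutation of a module $M_i$ is an automorphism, a distinguishing labeling must give the $m$ vertices of each $M_i$ pairwise distinct labels; with $m+1$ available labels this means that each module uses exactly $m$ of them and \emph{omits} a single label $\mu_i\in\{1,\dots,m+1\}$. Conversely, once the labels inside each module are distinct, any nontrivial symmetry that survives must move whole modules around. So, after understanding how automorphisms act on the partition $\{M_i\}$, the whole problem will reduce to choosing the vector $(\mu_0,\dots,\mu_{p-1})$ cleverly.

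The central step is to understand how an arbitrary automorphism $\sigma$ acts on the modules, and I would argue that $\sigma$ must permute $M_0,\dots,M_{p-1}$ as blocks. Indeed, two vertices lie in the same module iff they are \emph{false twins} (non-adjacent with equal neighbourhood); this relation is preserved by every automorphism, and for $p\neq4$ its classes are exactly the $M_i$. The only coincidence that could merge two distinct modules is $N(M_i)=N(M_{i+2})$, i.e. $\{i-1,i+1\}=\{i+1,i+3\}\pmod p$, which forces $p\mid 4$ and for distinct modules means $p=4$; this is precisely the excluded case, where in fact $C(m,4)\cong K_{2m,2m}$ and $D=2m+1$ by Theorem~\ref{multipartite}. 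Writing $\bar\sigma$ for the induced permutation of $\{0,\dots,p-1\}$, I would next read off from the generator $A=\{p-1+rp,\ p+1+rp\}$ that the modules $M_i,M_j$ are completely joined iff $j-i\equiv\pm1\pmod p$; hence $\bar\sigma$ is an automorphism of the quotient graph, which is the cycle $C_p$ for $p\ge3$ (and $K_2$ for $p=2$, $K_3$ for $p=3$).

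With this in hand the finish is short. If $\sigma$ preserves $c$, it maps $M_i$ onto $M_{\bar\sigma(i)}$ by a label-preserving bijection, which forces the two modules to omit the same label, i.e. $\mu_{\bar\sigma(i)}=\mu_i$ for all $i$. Thus $\bar\sigma$ is a colour-preserving automorphism of $C_p$ for the colouring $i\mapsto\mu_i$. I would therefore choose $(\mu_i)$ to be a \emph{distinguishing} colouring of $C_p$; since it is classical that $D(C_p)\le 3$ for every $p$ (and $=2$ for $p\ge6$) and $3\le m+1$, such a colouring exists using only labels from $\{1,\dots,m+1\}$ (for $p=2$ take $\mu_0\neq\mu_1$, for $p=3$ take $\mu_0,\mu_1,\mu_2$ pairwise distinct, matching the values for $K_{m,m}$ and $K_{m,m,m}$). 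Then $\bar\sigma=\mathrm{id}$, so $\sigma$ fixes every module setwise; and since the labels inside each module are distinct, $\sigma$ fixes every vertex, i.e. $\sigma=\mathrm{id}$. Hence $c$ is $(m+1)$-distinguishing and $D(C(m,p))\le m+1$.

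The main obstacle is the structural step: proving that automorphisms respect the module partition and that the quotient is exactly $C_p$, together with the neighbourhood computation that pins down where $p=4$ breaks. Everything else — distinctness inside modules, the reduction to colouring a cycle, and the small-case bookkeeping for $p\in\{2,3\}$ — is routine once that is established.
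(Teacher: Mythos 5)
Your proof is correct, but it takes a genuinely different route from the paper's. The paper also starts from the observation that each module must receive pairwise distinct labels, but then works with one concrete $(m+1)$-labeling (label $1$ on $0,\dots,\lfloor p/2\rfloor$ and on $2p-1$, label $2$ on the rest of $P_1$, label $j+1$ on $P_j$ for $j\ge 2$) and kills automorphisms vertex by vertex: the multiset of labels in $N(0)$ is unique, so $\delta(0)=0$; then distances to $0$ (computed via the quotient cycle in a small claim) pin down every vertex labeled $1$ or $2$; finally the sets $N(u)\cap\{0,\dots,p-1\}$ separate the remaining vertices, and it is exactly here that $p=4$ would break the argument ($i+1=j-1$ and $i-1=j+1$ forcing $p=4$). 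You instead prove the structural statement that every automorphism permutes the modules as blocks (false-twin classes, which coincide with the $M_i$ precisely when $p\neq 4$), identify the quotient with $C_p$ (or $K_2$, $K_3$), and reduce the whole problem to choosing the vector of omitted labels $(\mu_i)$ to be a distinguishing colouring of the quotient, using $D(C_p)\le 3\le m+1$. Your version is more conceptual, localizes the role of $p\neq4$ cleanly, and explains \emph{why} $m+1$ labels suffice (one extra label is exactly what is needed to break the quotient cycle); the cost is that you must fully justify the block-permutation step and import the classical values of $D(C_p)$, whereas the paper's argument is self-contained and purely computational. Both are valid proofs of the lemma.
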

\begin{proof}
 If $p\in \{2,3\}$ the proposition is true by Theorem \ref{multipartite}. Consider $c$ be the $(m+1)$-labeling defined as follows (See Fig. \ref{m+1color}):


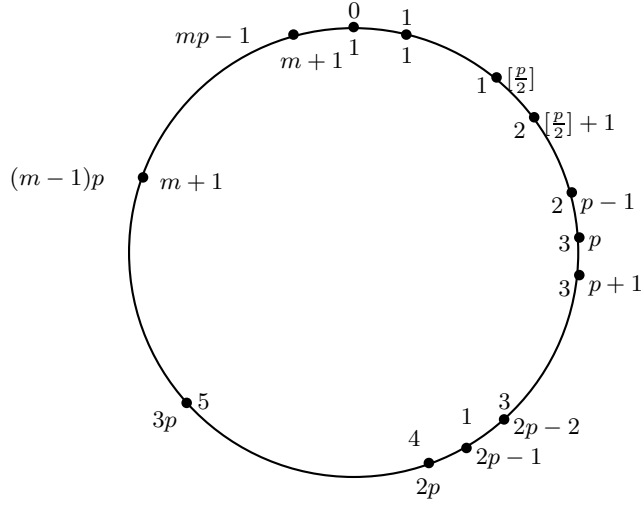
\begin{figure}
\centering
\begin{pspicture}(0,1)(9,9)
\pscircle(5,5){3}
\pscircle[fillstyle=solid,fillcolor=black](5,8){0.07}
\pscircle[fillstyle=solid,fillcolor=black](5.7,7.9){0.07}
\pscircle[fillstyle=solid,fillcolor=black](6.9,7.33){0.07}
\pscircle[fillstyle=solid,fillcolor=black](7.4,6.8){0.07}
\pscircle[fillstyle=solid,fillcolor=black](7.9,5.8){0.07}
\pscircle[fillstyle=solid,fillcolor=black](8,5.2){0.07}
\pscircle[fillstyle=solid,fillcolor=black](8,4.7){0.07}
\pscircle[fillstyle=solid,fillcolor=black](7,2.78){0.07}
\pscircle[fillstyle=solid,fillcolor=black](6.5,2.4){0.07}
\pscircle[fillstyle=solid,fillcolor=black](6,2.2){0.07}
\pscircle[fillstyle=solid,fillcolor=black](2.78,3){0.07}
\pscircle[fillstyle=solid,fillcolor=black](2.2,6){0.07}
\pscircle[fillstyle=solid,fillcolor=black](4.2,7.9){0.07}
\uput[45](4.8,8){\small $0$}
\uput[45](5.5,7.9){\small $1$}
\uput[45](6.9,7){\small $[\frac{p}{2}]$}
\uput[45](7.4,6.4){\small $[\frac{p}{2}]+1$}
\uput[45](7.9,5.4){\small $p-1$}
\uput[45](8,4.9){\small $p$}
\uput[45](8,4.3){\small $p+1$ }
\uput[45](7,2.4){\small $2p-2$}
\uput[45](6.5,2){\small $2p-1$}
\uput[45](5.7,1.6){\small $2p$}
\uput[45](2.2,2.5){\small $3p$}
\uput[45](0.3,5.7){\small $(m-1)p$}
\uput[45](2.5,7.6){\small $mp-1$}
\uput[45](4.8,7.5){\small $1$}
\uput[45](5.5,7.4){\small $1$}
\uput[45](6.5,7){\small $1$}
\uput[45](7,6.4){\small $2$}
\uput[45](7.5,5.4){\small $2$}
\uput[45](7.6,4.9){\small $3$}
\uput[45](7.6,4.3){\small $3$ }
\uput[45](6.8,2.8){\small $3$}
\uput[45](6.3,2.6){\small $1$}
\uput[45](5.6,2.3){\small $4$}
\uput[45](2.8,2.8){\small $5$}
\uput[45](2.3,5.7){\small $m+1$}
\uput[45](3.9,7.3){\small $m+1$}
\end{pspicture}
\caption{ The $(m+1)$-labeling: the label of each vertex is given inside the cycle.} \label{m+1color}
\end{figure}


\begin{equation*}
c(v)=    \left\{
      \begin{array}{ll}
        1   & \hspace{7mm}   0 \leqslant v \leqslant \lfloor \frac{p}{2}\rfloor  \hspace{2mm} \text{and}\hspace{2mm} v=2p-1  \\
        2   & \hspace{7mm}   \lfloor \frac{p}{2}\rfloor < v \leqslant p-1 \\
        j+1 & \hspace{7mm}   v\in P_j  \hspace{2mm} \text{and} \hspace{2mm}2\leqslant j\leqslant m \hspace{2mm}\text{and} \hspace{2mm} v\neq 2p-1

      \end{array}
    \right.
\end{equation*}

\noindent Suppose that there exists an automorphism $\delta$ preserving this labeling and prove that $\delta$ is trivial.

\noindent Since $p>4$, $0$ is the unique vertex labeled $1$ which has the following sequence of label
in his neighborhood $(1,1,2,3,4,4,\dots,m+1,m+1)$. Thus $\delta(0)=0$.

\noindent However, we refer to the following claim:
\begin{claim}\label{distance}
For each vertex  $i$ in $C(m,p)$ where $0\leq i\leq p-1$, we have:

\begin{equation*}
d(0,i)=    \left\{
       \begin{array}{ll}
         i   &   \hspace{5mm} 1 \leqslant i \leqslant \lfloor \frac{p}{2}\rfloor \\
         p-i &   \hspace{5mm} \lfloor \frac{p}{2} \rfloor <  i \leqslant p-1
       \end{array}
     \right.
\end{equation*}
 \end{claim}
\begin{proof}
\noindent First observe that  for all pair of vertices $u$ and $v$ in the same module $M$ and $z\in V\setminus M$,
we have $d(u,z)=d(v,z)$ and $d(u,v)=2$.

Now, if we contract each module $M_i$ of $C(m,p)$, then we get a cycle on $p$ vertices which implies the claim.
\end{proof}

\noindent Let us prove that each vertex lebeled $1$, is fixed by the automorphism $\delta$:\\
Consider the table describing the sequence of labels of the vertex $u$:
\begin{table}
\begin{tabular}{lll}
\hline\noalign{\smallskip}
\bf $u$ & \bf $c(u)$ & \bf $c(N(u))$ \\
\noalign{\smallskip}\hline\noalign{\smallskip}
\bf $0$ & \bf $1$ & \bf $1,1,2,3,4,4, \dots, m+1,m+1$.\\
\bf $0 < i < \lfloor \frac{p}{2}\rfloor$ & \bf $1$ & \bf $1,1,3,3,4,4, \dots, m+1,m+1$.\\
\bf $\lfloor \frac{p}{2}\rfloor$ & \bf $1$ & \bf $1,2,3,3,4,4, \dots, m+1,m+1$.\\
\bf $\lfloor \frac{p}{2}\rfloor < j < p-1$ & \bf  $2$ & \bf $2, 2 ,3,3,4,4, \dots, m+1,m+1$.\\
\bf $p-1$ & \bf $2$ & \bf $1, 2, 3, 3,4,4, \dots, m+1,m+1$.\\
\bf $2p-1$ & \bf $1$ & \bf $1,2,3,3,4,4, \dots, m+1,m+1$.\\
\noalign{\smallskip}\hline
\end{tabular}
\caption{The sequence of labels being in the neighborhood of vertices.}
\end{table}

For all $i$ such that $0< i< \lfloor \frac{p}{2} \rfloor$, we have the sequence of labels occurring in the neighborhood
of a vertex $i$ is $(1,1,3,3, \dots m+1, m+1)$. More than, for all two distinct vertices $u$ and $v$ such
that $0< u,v< \lfloor \frac{p}{2} \rfloor$ we have $d(u,0)\neq d(v,0)$.
Then, since $\delta(0)=0$ we get $\delta (u)=u$ and $\delta (v)=v$.
Generally, for all vertex $i$ such that $0< i< \lfloor \frac{p}{2} \rfloor$, we obtain $\delta(i)=i$.\\

\noindent More than, the sequence of labels in the neighborhood of $2p-1$ and $\lfloor \frac{p}{2}\rfloor$ is
$\{1, 2, 3, 3, 4, 4, \dots, m+1, m+1 \}$. Since $d(\lfloor \frac{p}{2} \rfloor,0) > d(2p-1,0)=1$, then we get $\delta(2p-1)=2p-1$ and
$\delta(\lfloor \frac{p}{2} \rfloor)= \lfloor \frac{p}{2} \rfloor$.

\noindent Now observe that by the previous claim, any distinct vertices $u$ and $v$ labeled $2$, we have $d(u,0)\neq d(v,0)$.
Then for any vertex $u$ such that $c(u)=2$, we have $\delta(u)=u$.

Finally, let us prove that each vertex $v$ in $C(m,p)\setminus (P_1\cup \{2p-1\})$ is fixed by the automorphism $\delta$.
For that, it is enough to show  for all pair of distinct vertices $u$ and $v$ such that $c(u)=c(v)$,  we have
$N(u)\cap \{0,1,2,\dots,p-1\} \neq  N(v)\cap \{0,1,2,\dots,p-1\}$. This proposition will imply that each vertex $v$
labeled $c(v)$  $(c(v)\geq2)$ is fixed by $\delta$ and we conclude the proof of theorem.

\noindent Let $u$ and $v$ two distinct vertices such that $c(u)=c(v)$ with $u,v \in C(m,p)\setminus (P_1\cup \{2p-1\}) $.\\

\noindent Since $c(u)=c(v)$, we have $u \in M_i$ and $v\in M_j$ with $i\neq j$.
Then $i-1, i+1 \in N(u)$ and $j-1, j+1 \in N(v)$.

If $i=0$ then $p-1\in N(u)$ since $p\in M_i$.
Similarly, if $i=p-1$, then $0\in N(u)$ since $mp-1\in M_i$.

\noindent Therefore, modulo $p$, we have that $i-1, i+1 \in N(u)\cap \{0,1,\dots,p-1 \}$ and $j-1, j+1 \in N(v)\cap \{0,1,\dots,p-1 \}$.

Additionally, observe that any vertex $u$ has exactly two neighborhood among $p$ consecutive vertices of $G$.
Thus $N(u)\cap \{0,1, \dots,p-1\} =\{i-1, i+1 \; \; \bmod{p} \}$ and $N(v)\cap \{0,1, \dots,p-1\} =\{j-1, j+1\; \; \bmod{p}\}$.

\noindent Now, if $N(u)\cap \{0,1,\dots,p-1 \}= N(v)\cap \{0,1,\dots,p-1 \}$ and $i\neq j$, then $i+1=j-1$ and $i-1=j+1$.
Thus $j=i-2$, $j=i+2$ and $p=4$.

Since $p>4$, we get that $N(u)\cap \{0,1,\dots,p-1 \}\neq N(v)\cap \{0,1,\dots,p-1 \}$.
\end{proof}
\vspace{3mm}

\noindent Lemma \ref{borne} and Lemma \ref{D(G)} give the proof of Theorem \ref{principal}.
The following result gives the value of distinguishing number for $p=4$:

\begin{corollary}\label{p4}
For each $m\geq 2$, $C(m,4)$ is
isomorphic to $C(2m,2)$ $($or $K_{2m,2m})$ and $D(C(m,4))=$ $2m+1$.
\end{corollary}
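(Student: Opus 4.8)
The plan is to first identify the isomorphism type of $C(m,4)$ explicitly, and then to read off its distinguishing number from the complete multipartite formula of Theorem \ref{multipartite}. So the first step is to compute the generator of $C(m,4)$ directly. Here $n=4m$ and
\[
A=\set{3+4r,\;5+4r\;:\;0\leq r\leq m-1}.
\]
The values $3+4r$ range over $\set{3,7,\dots,4m-1}$, that is, all residues $\equiv 3 \pmod 4$, while the values $5+4r$ range over $\set{5,9,\dots,4m-3}$ together with $5+4(m-1)=4m+1\equiv 1 \pmod{4m}$, that is, all residues $\equiv 1 \pmod 4$. Hence $A$ is exactly the set of odd residues modulo $4m$, and two vertices $i,j$ are adjacent iff $j-i$ is odd, i.e.\ iff $i$ and $j$ have opposite parity. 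Consequently the even vertices $\set{0,2,\dots,4m-2}$ and the odd vertices $\set{1,3,\dots,4m-1}$ form the two sides of a complete bipartite graph, so $C(m,4)\cong K_{2m,2m}$.

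Next I would invoke the identity $C(m,2)\cong K_{m,m}$ already recorded in the excerpt: applying it with $m$ replaced by $2m$ gives $C(2m,2)\cong K_{2m,2m}$, which together with the previous step establishes the claimed isomorphism $C(m,4)\cong C(2m,2)$.

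Finally I would compute the distinguishing number via Theorem \ref{multipartite}. In the notation of that theorem, $K_{2m,2m}$ has a single part size $a_1=2m$ occurring $j_1=2$ times, so
\[
D(K_{2m,2m})=\min\set{q:\binom{q}{2m}\geq 2}.
\]
Since $\binom{2m}{2m}=1<2$ while $\binom{2m+1}{2m}=2m+1\geq 2$ for $m\geq 2$, the minimum is $q=2m+1$, whence $D(C(m,4))=2m+1$.

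The only genuinely delicate point is the first step: one must track the reduction modulo $4m$ carefully, since it is precisely the single wrap-around term $5+4(m-1)\equiv 1$ that fills in the residue class $\equiv 1\pmod 4$ and makes $A$ the full set of odd residues (equivalently, recovers the second colour class). Once that parity description of adjacency is established, both the isomorphism and the value of $D$ follow immediately from results already proved in the excerpt.
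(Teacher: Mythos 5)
Your proposal is correct and follows essentially the same route as the paper: identify $C(m,4)$ as the complete bipartite graph $K_{2m,2m}\cong C(2m,2)$ and then read off $D=2m+1$ from Theorem \ref{multipartite}. The only difference is cosmetic --- you verify the bipartition by computing that the generator $A$ consists of all odd residues modulo $4m$, while the paper observes that the four modules $M_0,M_1,M_2,M_3$ satisfy $N(M_0)=N(M_2)=M_1\cup M_3$ and $N(M_1)=N(M_3)=M_0\cup M_2$, so that $M_0\cup M_2$ and $M_1\cup M_3$ are the two stable parts of size $2m$.
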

\begin{proof}
 \noindent The graph $C(m,4)$ is partitioned into four modules $M_0$, $M_1$, $M_2$, $M_3$. We have:
$N(M_0)=N(M_2)=M_1\cup M_3$  and $N(M_1)=N(M_3)=M_0\cup M_2$. Thus, the module $M_i$
is not maximal where $i \in \{0,1,2,3\}$. Furthermore, $M_0 \cup M_2$  and $M_1\cup M_3$ are stables of size $2m$.
Then, the graph $C(m,4)$ is a multipartite graph $K_{2m,2m}$ and $D(C(m,4))=D(K_{2m,2m})=D(C(2m,2))=2m+1$.
\end{proof}

\noindent \textbf{PROOF OF THEOREM \ref{main}}

\noindent Let $d_1,d_2,\dots,d_r$ be an ordered sequence of distinct integers. Let $m_i=d_i -1$ for all $i=1,\dots,r$
and $p_i=\displaystyle\prod_{j\neq i} m_j$.

\noindent By definition, $m_i p_i=m_j p_j$ for $i\neq j$ for $i,j=1,\dots,r$.\\
If all $p_i\neq 4$, then let $n=m_i p_i$ else $n=3m_i p_i$ for all $i=1,\dots,r$.\\
Now, by Theorem \ref{principal}, $D(C(m_i,p_i))=m_i+1=d_i$ for all $i=1,\dots,r$.\\
So, $(G_i)_i ={(C(m_{i},p_{i}))}_i$ with $i=1,\dots,r$,
is a family of connected circulant graphs of order $n$ such that $D(G_i)=d_i$.
\qed

\section{Remarks and conclusion}\label{sec:4}

\noindent We have studied the structure of circulant graphs $C(m,p)$ by providing the associated distinguishing number.
We have  determined the distinguishing number of circulant graphs $C(m,p)$ for all $m.p\geq 3$ with $m\geqslant 1$ and $p\geqslant 2$.
We can summarize the result which give the value of distinguishing number for circulant graphs $C(m,p)$ as follows:

$D(C(m,p))=$
$\begin{cases}
m &  (m\geqslant 3 \; \; \text{and} \; \; p=1)\\
m+1 & (m=1 \; \; \text{and} \; \; p\geq 6) \; \; or \; \; (m\geq2 \; \; p\geq2 \; \; p\neq4) \\
2m+1 & (m=1 \; \; \text{and} \; \; p\in\{3,4,5\}) \; \; or \; \; (m\geq2  \; \; p=4)
\end{cases}$

\noindent We deduce that for a given integer $n=\displaystyle\prod_{i=1}^{r} m_i$ for $r\geq 2$ and $m_i\geq 1$,
we can build a family of graphs of same order $n$ where the distinguishing number
depends on divisors of $n$ . The main idea of constructing such graphs consists of partitioning the vertex set into
modules of same size. The circulant graphs are well privileging structure. One may ask if we can construct such family
of circulant graphs with smaller order?

\noindent For instance, we can improve in  Theorem \ref{main} the order  $n$ of $(C(m_i,p_i))_i$ for $i=1,\dots r$,
by taking $n=\frac{\displaystyle\prod_{i=1}^{r} m_i}{gcd(m_i, \displaystyle\prod_{j<i} m_j)}$.

\end{document}